\theoremstyle{definition}
\newtheorem{definition}{Definition}
\newtheorem{theorem}[definition]{Theorem}
\theoremstyle{remark}
\newtheorem{remark}[definition]{Remark}
\newtheorem{example}[definition]{Example}
\newcounter{enumctr}
\newcommand{\N}{\mathbb{N}}
\newcommand{\R}{\mathbb{R}}
\renewcommand{\phi}{\varphi}
\begin{document}
\title{\vspace*{-10mm}
Stability of fractional-order nonlinear systems by Lyapunov direct method}
\author{H.T. Tuan\footnote{\tt httuan@math.ac.vn, \rm Institute of Mathematics, Vietnam Academy of Science and Technology, 18 Hoang Quoc Viet, 10307 Ha Noi, Viet Nam},
\;\;Hieu Trinh\footnote{\tt hieu.trinh@deakin.edu.au, \rm School of Engineering, Deakin University, Geelong, VIC 3217, Australia}
}
\date{}
\maketitle
\abstract{In this paper, by using a characterization of functions having fractional derivative and properties of positive solutions to a Volterra integral equation, we propose a rigorous fractional Lyapunov function candidate method to analyze the stability of fractional-order nonlinear systems.  First, we prove an inequality concerning the fractional derivatives of convex Lyapunov functions without the assumption of the existence of derivative of pseudo-states. Second, we establish fractional Lyapunov functions to fractional-order systems without the assumption of the global existence of solutions. Our theorems fill the gaps and strengthen results in some existing papers.
}
\section{Introduction}
Fractional differential equations have attracted increasing interest in the last decade due to the
fact that many mathematical problems in science and engineering can be modeled by fractional
differential equations. For more details on applications of fractional differential equations, we refer the interested reader to the monographs \cite{Babdyopadhyay}, \cite{Oldham}, \cite{Samko} and the references therein.

One of the most important problems in the qualitative theory of fractional differential equations is stability theory. Following Lyapunov's seminal 1892 thesis, these two methods are expected to also work for fractional differential equations:
\begin{itemize}
\item[$\bullet$] Lyapunov's first method: the method of linearization of the nonlinear equation along an
orbit, and the transfer of asymptotic stability from the linear
to the nonlinear equation; and
\item[$\bullet$] Lyapunov's second method: the method of Lyapunov candidate functions, i.e. of scalar functions
on the state space such that their energy decreases along orbits.
\end{itemize}

Recently, in \cite{Cong_3} and \cite{Tuan_1}, Cong {\textit{ et. al.}} fully developed Lyapunov's first method for fractional-order nonlinear systems. On the other hand, although several results on the Lyapunov's second method for fractional-order nonlinear systems have been published, the development of this theory is still in its infancy and requires further investigation. One of the reasons for this might be that computation and estimation of fractional derivatives of Lyapunov candidate functions are very complicated due to the fact that the well-known Leibniz rule does not hold true for such derivatives.

To the best of our knowledge, the first valuable contribution in the theory of fractional Lyapunov functions is the paper \cite{Li}. The method in \cite{Li} became applicable after effective fractional derivative inequalities were established, see e.g. \cite[Inequalities (6)\;\& (16)]{Camacho}, \cite[Inequality (24)]{Camacho_1}, and \cite[Inequality (10)]{Chen}. In this direction, we recommend the papers \cite[Theorems 2\;\& 3]{Yunquan}, \cite[Theorems 2\;\& 3]{Chen}, \cite[Theorems 3.1 \& 3.3]{Liu}, and \cite[Example 1]{Fernadez}. However, there are some unavoidable shortcomings of this approach such as:
\begin{itemize}
\item[$\bullet$] Assumption of the global existence of solutions to fractional-order nonlinear systems, see e.g. \cite{Li}, \cite{Camacho}, and \cite{Chen}.
\item[$\bullet$] The derivative of the solutions are required for the proof of the involved fractional derivative inequalities in \cite[Inequalities (6)\;\& (16)]{Camacho}, \cite[Inequality (24)]{Camacho_1}, and \cite[Inequality (10)]{Chen}.
\end{itemize}

Following another approach using the fractional derivative of the Lyapunov candidate function along the vector field, Lakshmikantham, Leela and Devi \cite[Theorem 4.3.2, pp. 100--101]{Lak} also attempted to prove a Lyapunov sufficient condition for fractional differential equations. However, confusion on the locality of solutions to fractional systems makes their proof incomplete, see \cite[pp. 101, lines 4--5]{Lak} (note that the solution to equation (4.2.1) in \cite[pp. 96]{Lak} starts from $t_0$ and its solution which starts from $t_1$ are different).

Motivated by the aforementioned observations, in this paper we focus on proposing a rigorous method of Lyapunov candidate functions which is suitable for fractional-order nonlinear systems. Specifically, we establish fractional Lyapunov functions without the assumption of the global existence of solutions to fractional-order nonlinear systems. We also do not require the condition on the existence of derivative to pseudo-states in the inequality concerning the fractional derivatives of convex Lyapunov functions. The rest of our paper is organized as follows. Section 2 is devoted to recalling some notations and results about fractional calculus. In Section 3, we formulate the main result which concerns the stability of the trivial solution to fractional-order systems based on designing an effective Lyapunov candidate function.

To conclude this introductory section, we introduce some notations which are used throughout the paper. Denote by $\N$ the set of nature numbers, by $\R$ and $\R_{+}$ the set of real numbers and non-negative numbers, respectively. For some arbitrary positive constant $d$, let $\R^d$ be the $d$-dimensional Euclidean space with the scalar $\langle \cdot,\cdot \rangle$ and the norm $\|\cdot\|$. In $\R^d$, let $B_r(0)$ be the closed ball with the center at the origin and the radius $r>0$. For some $T>0$, denote by $C([0,T],\R^d)$ the space of continuous functions $x:[0,T]\rightarrow \R^d$. Finally, for $\alpha \in (0,1]$, we mean $\mathcal{H}^\alpha([0, T],\R^d)$ the standard H\"older space consisting of
functions $v\in C([0, T],\R^d)$ such that
\[
\|v\|_{\mathcal{H}^\alpha}:=\max_{0\leq t\leq T}\|v(t)\|+\sup_{0\leq s<t\leq T}\frac{\|v(t)-v(s)\|}{(t-s)^\alpha}<\infty
\]
and by $\mathcal{H}^\alpha_0([0,T],\R^d)$ the closed subspace of $\mathcal{H}^\alpha([0, T],\R^d)$ consisting of functions $v \in \mathcal{H}^\alpha([0, T],\R^d)$ such that
\[
\sup_{0\leq s<t\leq T,t-s\leq \varepsilon}\frac{\|v(t)-v(s)\|}{(t-s)^\alpha}\to 0\quad \text{as}\;\varepsilon\to 0.
\]
\section{Preliminaries}
We recall briefly a framework of fractional calculus and fractional differential equations.

Let $\alpha\in (0,1)$, $[0,T]\subset \R$ and $x:[0,T]\rightarrow \R$ be a measurable function such that $x\in L^1([0,T])$, i.e.\ $\int_0^T|x(\tau)|\;d\tau<\infty$. Then, the Riemann--Liouville integral of order $\alpha$ is defined by
\[
I_{0+}^{\alpha}x(t):=\frac{1}{\Gamma(\alpha)}\int_0^t(t-\tau)^{\alpha-1}x(\tau)\;d\tau\quad \hbox{ for } t\in (0,T],
\]
where the Gamma function $\Gamma:(0,\infty)\rightarrow \R$ is defined as
\[
\Gamma(\alpha):=\int_0^\infty \tau^{\alpha-1}\exp(-\tau)\;d\tau,
\]
see e.g., Diethelm \cite{Kai}. The corresponding Riemann--Liouville fractional derivative of order $\alpha$ is given by
\[
^{R}D_{0+}^\alpha x(t):=(D I_{0+}^{1-\alpha}x)(t) \quad\forall t\in (0,T],
\]
where $D=\frac{d}{dt}$ is the usual derivative. On the other hand, the \emph{Caputo fractional derivative } $^{C\!}D_{0+}^\alpha x$ of $x$ is defined by
\[
^{C\!}D_{0+}^\alpha x(t):=^{R\!}D_{0+}^\alpha(x(t)-x(0)),\qquad \hbox{ for } t\in (0,T],
\]
see \cite[Definition 3.2, pp. 50]{Kai}. The Caputo fractional derivative of a $d$-dimensional vector function $x(t)=(x_1(t),\dots,x_d(t))^{T}$ is defined component-wise as $$^{C\!}D^\alpha_{0+}x(t)=(^{C\!}D^\alpha_{0+}x_1(t),\dots,^{C\!}D^\alpha_{0+}x_d(t))^{T}.$$

Denote by $I^\alpha_{0+}C([0,T],\R^d)$ the space of functions $\varphi:[0,T]\rightarrow \R^d$ such that there exists a function $\psi\in C([0,T],\R^d)$ satisfying $\varphi=I^\alpha_{0+}\psi$. The following result gives a characterization of functions having Caputo fractional derivative.
\begin{theorem}\label{Theorem1}
For $\alpha\in (0,1)$ and a function $v\in C([0,T],\R^d)$, the following conditions (i), (ii), (iii)  are equivalent:
\begin{itemize}
\item[(i)] the fractional derivative $^{C}D^\alpha_{0+}v\in C([0,T],\R^d)$ exists;
\item[(ii)] a finite limit $\lim_{t\to 0}\frac{v(t)-v(0)}{t^\alpha}:=\gamma$ exists, and
\[\sup_{0<t\leq T}\left\|\int_{\theta t}^t\frac{v(t)-v(\tau)}{(t-\tau)^{\alpha+1}}\;d\tau\right\|\to 0\quad \text{as}\; \theta\to 1;\]
\item[(iii)] $v$ has the structure $v-v(0)=t^\alpha \gamma+v_0$, where $\gamma$ is a constant vector, $v_0\in \mathcal{H}_0^\alpha ([0,T],\R^d)$, and $\int_0^t (t-\tau)^{-\alpha-1} (v(t)-v(\tau))d\tau=:w(t)$ converges for every $t\in (0,T]$ defining a function $w\in C((0,T],\R^d)$ which has a finite limit $\lim_{t\to 0}w(t)=:w(0)$.
\end{itemize}
For $v\in C([0,T],\R^d)$ having fractional derivative $^{C}D^\alpha_{0+}v\in C([0,T],\R^d)$, it holds $^{C}D^\alpha_{0+}v(0)=\Gamma(\alpha+1)\gamma$, and
\begin{align*}
^{C}D^\alpha_{0+}v(t)&=\frac{v(t)-v(0)}{\Gamma(1-\alpha)t^\alpha}\\
&\hspace{1cm}+\frac{\alpha}{\Gamma(1-\alpha)}\int_0^t \frac{v(t)-v(\tau)}{(t-\tau)^{\alpha+1}}d\tau,\quad 0<t\leq T.
\end{align*}
\end{theorem}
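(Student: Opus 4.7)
My approach is to derive the Marchaud--type representation
\[^{C}D^\alpha_{0+}v(t)=\frac{v(t)-v(0)}{\Gamma(1-\alpha)\,t^\alpha}+\frac{\alpha}{\Gamma(1-\alpha)}\int_0^t\frac{v(t)-v(\tau)}{(t-\tau)^{\alpha+1}}\,d\tau,\qquad t\in(0,T],\qquad (\star)\]
and to read (i), (ii), (iii) as three equivalent certifications that the right--hand side of $(\star)$ defines a continuous function on $[0,T]$. The starting point is the algebraic splitting $v(\tau)-v(0)=(v(t)-v(0))-(v(t)-v(\tau))$; inserted into the integral defining $I^{1-\alpha}_{0+}(v-v(0))$ and combined with $\int_0^t(t-\tau)^{-\alpha}d\tau=t^{1-\alpha}/(1-\alpha)$, it gives
\[\Gamma(1-\alpha)\,I^{1-\alpha}_{0+}(v-v(0))(t)=\frac{(v(t)-v(0))\,t^{1-\alpha}}{1-\alpha}-J(t),\qquad J(t):=\int_0^t(t-\tau)^{-\alpha}(v(t)-v(\tau))\,d\tau.\]

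Differentiating this identity is the core technical step. Writing out $J(t+h)-J(t)$, applying the splitting $v(t+h)-v(\tau)=(v(t+h)-v(t))+(v(t)-v(\tau))$, and using the identity $(t+h-\tau)^{-\alpha}-(t-\tau)^{-\alpha}=-\alpha\int_t^{t+h}(s-\tau)^{-\alpha-1}\,ds$ followed by Fubini to swap the resulting double integral, the difference quotient decomposes into a boundary contribution $\int_t^{t+h}(t+h-\tau)^{-\alpha}(v(t)-v(\tau))d\tau/h$ and a near--diagonal piece. Under the uniform smallness in (ii) (equivalently the convergence in (iii)), both pieces have limits as $h\to 0$, yielding
\[J'(t)=t^{-\alpha}(v(t)-v(0))-\alpha\,M(t),\qquad M(t):=\int_0^t\frac{v(t)-v(\tau)}{(t-\tau)^{\alpha+1}}\,d\tau.\]
Combined with the derivative of $(v(t)-v(0))t^{1-\alpha}/(1-\alpha)$---where the $t^{-\alpha}$ singular contribution at the endpoint is absorbed using $\lim_{t\to 0}(v(t)-v(0))/t^\alpha=\gamma$---this gives $(\star)$ together with the boundary value $^{C}D^\alpha_{0+}v(0)=\Gamma(\alpha+1)\gamma$.

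With $(\star)$ in hand the chain (i) $\Rightarrow$ (ii) $\Rightarrow$ (iii) $\Rightarrow$ (i) is accessible. For (i) $\Rightarrow$ (ii), set $f:={^{C}}D^\alpha_{0+}v\in C$ so $v-v(0)=I^\alpha_{0+}f$; the substitution $\tau=ts$ with dominated convergence gives $(v(t)-v(0))/t^\alpha\to f(0)/\Gamma(\alpha+1)=:\gamma$, and substituting the integral representation of $v-v(0)$ into $\int_{\theta t}^t(v(t)-v(\tau))/(t-\tau)^{\alpha+1}d\tau$ followed by Fubini and continuity of $f$ supplies the required uniform decay. For (ii) $\Rightarrow$ (iii), the limit at $0$ furnishes $v_0:=v-v(0)-t^\alpha\gamma$ with $v_0(t)/t^\alpha\to 0$, and the uniform truncated--integral bound translates into the vanishing local Hölder modulus that places $v_0\in\mathcal{H}^\alpha_0$ while simultaneously ensuring convergence and continuity of the Marchaud integral (hence of $w$). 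Finally, (iii) $\Rightarrow$ (i) is immediate from $(\star)$ once its right--hand side is recognized as continuous on $[0,T]$. The principal obstacle is the rigorous justification of the differentiation of $J$ carried out in the second paragraph: a naive integration by parts would require $v'$, which is not available, so the delicate task is to use the quantitative near--diagonal bound supplied by (ii)/(iii) to interchange the limit $h\to 0$ with the singular integrals and to control the endpoint term $\int_t^{t+h}(t+h-\tau)^{-\alpha}(v(t)-v(\tau))\,d\tau$, whose naive estimation (Hölder continuity of $v$) is insufficient.
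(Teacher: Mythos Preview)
The paper does not prove this theorem at all: its entire proof is the one-line citation ``See \cite[Theorem 5.2, pp.~475]{Vainikko}.'' So there is nothing to compare your argument against within the paper itself; you are supplying a proof where the authors simply quote the literature.

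That said, your plan is essentially the strategy used in Vainikko's paper: derive the Marchaud form $(\star)$ by the splitting $v(\tau)-v(0)=(v(t)-v(0))-(v(t)-v(\tau))$ and then read the three conditions as equivalent ways of saying that the singular integral on the right of $(\star)$ makes sense and is continuous up to $t=0$. Your identification of the main obstacle---justifying $J'(t)$ via difference quotients rather than an unavailable integration by parts---is accurate, and the decomposition you propose (boundary piece plus near-diagonal piece controlled by the uniform bound in (ii)) is the right mechanism. One place that deserves more than a sentence is (ii) $\Rightarrow$ (iii): the passage from the uniform smallness of $\int_{\theta t}^t (v(t)-v(\tau))(t-\tau)^{-\alpha-1}\,d\tau$ to the statement $v_0\in\mathcal{H}^\alpha_0$ is not automatic, since the integral condition averages the increments rather than bounding them pointwise; Vainikko handles this by an auxiliary estimate, and you should expect to need a short separate argument there rather than a direct ``translation.''
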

\begin{proof}
See \cite[Theorem 5.2, pp. 475]{Vainikko}.
\end{proof}

Let $D\subset \R^d$ is an open set and $0\in D$. In this paper, we consider the following equation with the fractional order $\alpha\in (0,1)$:
\begin{equation}\label{Eq_Ex}
^{C}D^\alpha_{0+}x(t)=f(x(t)),\quad \text{for}\; t\in (0,\infty),
\end{equation}
where $f:D\rightarrow \R^d$ satisfies the conditions:
\begin{itemize}
\item[(f.1)] $f(0)=0$;
\item[(f.2)] the function $f(\cdot)$ is local Lipchitz continuous in a neighborhood of the origin.
\end{itemize}

Since $f$ is local Lipschitz continuous, \cite[Theorem 6.5]{Kai} implies unique existence of solutions of initial value problems
\eqref{Eq_Ex}, $x(0) = x_0$ for $x_0 \in \R^n$. Let $\varphi : I \times \R^d \rightarrow \R^d$, $t \mapsto \varphi(t,x_0)$ denote the solution of \eqref{Eq_Ex}, $x(0) = x_0$, on its maximal interval of existence $I = [0,t_{\max}(x_0))$ with $0 < t_{\max}(x_0) \leq \infty$. We now give the notions of stability of the trivial solution of \eqref{Eq_Ex}.
\begin{definition}\label{DS}
\begin{itemize}
\item[(i)] The trivial solution of \eqref{Eq_Ex} is called \emph{stable} if for any $\varepsilon >0$ there exists $\delta=\delta(\varepsilon)>0$ such that for every $\|x_0\|<\delta$ we have $t_{\max}(x_0) =\infty$ and
\[
\|\varphi(t,x_0)\|< \varepsilon,\quad \forall t\geq 0.
\]
\item[(ii)] The trivial solution is called \emph{asymptotically stable} if it is stable and there exists $\widehat{\delta}> 0$ such that $\lim_{t\to \infty}\varphi(t,x_0)=0$ whenever $\|x_0\|<\widehat\delta$.
\end{itemize}
\end{definition}
\section{Lyapunov direct method for fractional order systems}
In this section, we will establish a Lyapunov candidate function for a fractional-order system to analyze the asymptotic behavior of solutions around the equilibrium points. To do this, we need the following preparatory result which gives an upper bound of the fractional derivative of a composite function.
\begin{theorem}\label{Inequality}
For a given $x_0\in \R^d$, let $u\in \{x_0\}+I^\alpha_{0+} C([0,T],\R^d)$ and $V:\R^d\rightarrow \R$ satisfies the following conditions:
\begin{itemize}
\item[(V.1)] the function $V$ is convex on $\R^d$ and $V(0)=0$;
\item[(V.2)] the function $V$ is differentiable on $\R^d$.
\end{itemize}
Then the following inequality holds for all $t\in [0,T]$:
\begin{equation}\label{Inequality_est}
^{C}D^\alpha_{0+}V(u(t))\leq \langle \nabla V(u(t)),^{C}D^\alpha_{0+}u(t)\rangle,
\end{equation}
where $\nabla V$ is the gradient of the function $V$.
\end{theorem}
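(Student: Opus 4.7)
The plan is to combine the first-order convexity inequality with the explicit integral representation of the Caputo derivative furnished by Theorem \ref{Theorem1}. Convexity and differentiability of $V$ yield the standard inequality $V(x) - V(y) \leq \langle \nabla V(x), x - y\rangle$ for all $x, y \in \R^d$; specializing to $x = u(t)$ and $y = u(\tau)$ with $0 \leq \tau \leq t \leq T$ gives the pointwise estimate
\[
V(u(t)) - V(u(\tau)) \leq \langle \nabla V(u(t)), u(t) - u(\tau)\rangle. \qquad (\star)
\]
Because the kernel $(t-\tau)^{-\alpha-1}$ is positive and the vector $\nabla V(u(t))$ is independent of $\tau$, integrating $(\star)$ against this kernel should yield the desired inequality once both sides are identified as Caputo fractional derivatives.

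Before invoking the representation, I must show that $v := V \circ u$ actually has a Caputo fractional derivative on $[0,T]$. I would do this by verifying condition (ii) of Theorem \ref{Theorem1} for $v$. The existence of $\lim_{t\to 0^+}(v(t) - v(0))/t^\alpha = \langle \nabla V(x_0), \gamma\rangle$ follows from differentiability of $V$ at $x_0$ combined with $\|u(t) - u(0)\| = O(t^\alpha)$ (a consequence of Theorem \ref{Theorem1}(ii) for $u$). For the uniform tail bound, the decomposition
\[
V(u(t)) - V(u(\tau)) = \langle \nabla V(u(t)), u(t) - u(\tau)\rangle - R(t, \tau),
\]
with the nonnegative Bregman remainder $R(t,\tau)$ satisfying $0 \leq R(t,\tau) \leq \langle \nabla V(u(t)) - \nabla V(u(\tau)), u(t) - u(\tau)\rangle$, reduces the problem: the leading linear piece inherits its uniform tail smallness directly from the one for $u$ in Theorem \ref{Theorem1}(ii), while the remainder is controlled by the uniform continuity of $\nabla V$ on the compact range of $u$ together with the $\mathcal{H}^\alpha_0$-structure of $u - x_0 - t^\alpha\gamma$.

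Granted the existence of ${}^C D^\alpha_{0+} V(u(\cdot))$, Theorem \ref{Theorem1} yields for $t \in (0, T]$
\[
{}^C D^\alpha_{0+} V(u(t)) = \frac{V(u(t)) - V(u(0))}{\Gamma(1-\alpha)t^\alpha} + \frac{\alpha}{\Gamma(1-\alpha)}\int_0^t \frac{V(u(t)) - V(u(\tau))}{(t-\tau)^{\alpha+1}}d\tau,
\]
and applying $(\star)$ pointwise (including at $\tau = 0$) and extracting the $\tau$-independent vector $\nabla V(u(t))$ gives
\[
{}^C D^\alpha_{0+} V(u(t)) \leq \left\langle \nabla V(u(t)), \frac{u(t) - u(0)}{\Gamma(1-\alpha)t^\alpha} + \frac{\alpha}{\Gamma(1-\alpha)}\int_0^t \frac{u(t) - u(\tau)}{(t-\tau)^{\alpha+1}}d\tau\right\rangle,
\]
which equals $\langle \nabla V(u(t)), {}^C D^\alpha_{0+} u(t)\rangle$ by the same representation formula applied to $u$. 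At $t = 0$, both sides agree via the identity ${}^C D^\alpha_{0+} v(0) = \Gamma(\alpha + 1)\gamma$ from Theorem \ref{Theorem1}.

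The main obstacle is the tail-condition step. A naive Lipschitz estimate on $V(u(t)) - V(u(\tau))$ divided by the kernel produces a non-integrable $(t-\tau)^{-1}$ singularity, so one cannot bypass the cancellation already encoded in the vector-valued tail estimate for $u$; the Bregman splitting above is precisely the mechanism for transferring that cancellation from $u$ to $V \circ u$.
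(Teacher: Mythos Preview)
Your proposal is correct and follows essentially the same route as the paper's proof: both verify condition (ii) of Theorem~\ref{Theorem1} for $V\circ u$, invoke the resulting integral representation, and then apply the first-order convexity inequality $V(u(t))-V(u(\tau))\leq \langle \nabla V(u(t)),u(t)-u(\tau)\rangle$ pointwise under the positive kernel. Your Bregman-remainder splitting for the tail condition is in fact more explicit than the paper's treatment, which simply asserts that the uniform tail smallness for $V\circ u$ follows from (V.2) together with the corresponding tail condition for $u$.
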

\begin{proof}
Due to $u\in \{x_0\}+I^\alpha_{0+} C([0,T],\R^d)$, there exists a function $\psi\in C([0,T],\R^d)$ such that $u=x_0+I^\alpha_{0+} \psi$. From \cite[Proposition 6.4, pp. 479]{Vainikko}, we see that the Caputo fractional derivative $^{C}D^\alpha_{0+}u$ exists and continuous on $[0,T]$. On the other hand, by Theorem \ref{Theorem1}, this derivative has the representation
\begin{equation}\label{eq1}
^{C}D^\alpha_{0+}u(0):=\Gamma(\alpha+1)\gamma,
\end{equation}
where $\gamma=\frac{\psi(0)}{\Gamma(\alpha+1)}$, and
\begin{align}\label{eq2}
\notag ^{C}D^\alpha_{0+}u(t)&=\frac{u(t)-u(0)}{\Gamma(1-\alpha)t^\alpha}\\
&\hspace{0.5cm}+\frac{\alpha}{\Gamma(1-\alpha)}\int_0^t \frac{u(t)-u(\tau)}{(t-\tau)^{\alpha+1}}d\tau,\quad 0<t\leq T.
\end{align}
Using (V.1), (V.2) and by a direct computation, $\lim_{t\to 0}\frac{V(u(t))-V(u(0))}{t^\alpha}=\langle\nabla V(u(0)),\gamma\rangle$. Moreover, from (V.2) and the fact
\[\sup_{0<t\leq T}\left\|\int_{\theta t}^t\frac{u(t)-u(\tau)}{(t-\tau)^{\alpha+1}}\;d\tau\right\|\to 0\qquad \text{as}\quad \theta\to 1,\]
the limit below holds
\[
\sup_{0<t\leq T}\left|\int_{\theta t}^t\frac{V(u(t))-V(u(\tau))}{(t-\tau)^{\alpha+1}}\;d\tau\right|\to 0\qquad \text{as}\quad \theta\to 1,
\]
which together with Theorem \ref{Theorem1} shows that
\begin{equation}\label{eq3}
^{C}D^\alpha_{0+}V(u(0))=\Gamma(\alpha+1)\langle\nabla V(u(0)),\gamma\rangle
\end{equation}
 and for $t\in (0,T]$:
\begin{align}\label{eq4}
\notag ^{C}D^\alpha_{0+}V(u(t))&=\frac{V(u(t))-V(u(0))}{\Gamma (1-\alpha)t^\alpha}\\
&\hspace{0.5cm}+\frac{\alpha}{\Gamma(1-\alpha)}\int_0^t \frac{V(u(t))-V(u(\tau))}{(t-\tau)^{\alpha+1}}d\tau.
\end{align}
From \eqref{eq1} and \eqref{eq3} we have
\begin{equation}\label{eq5}
^{C}D^\alpha_{0+}V(u(0))=\langle\nabla V(u(0)),^{C}D^\alpha_{0+}u(0)\rangle.
\end{equation}
For $0<t\leq T$, using the representations \eqref{eq2} and \eqref{eq4} leads to
\begin{align}
&\notag ^{C}D^\alpha_{0+}V(u(t))-\langle\nabla V(u(t)),^{C}D^\alpha_{0+}u(t)\rangle\\
\notag&\hspace{0.5cm}=\frac{V(u(t))-V(u(0))-\langle\nabla V(u(t)),
u(t)-u(0)\rangle}{\Gamma (1-\alpha)t^\alpha}\\
&\notag\hspace{1cm}+\frac{\alpha}{\Gamma(1-\alpha)}\int_0^t \frac{V(u(t))-V(u(\tau))}{(t-\tau)^{\alpha+1}}d\tau\\
&\hspace{1cm}-\frac{\alpha}{\Gamma(1-\alpha)}\int_0^t\frac{\langle \nabla V(u(t)),u(t)-u(\tau)\rangle}{(t-\tau)^{\alpha+1}}d\tau.\label{eq7}
\end{align}
Because $V$ is convex and differentiable, using \cite[Theorem 25.1, pp. 242]{Rockafellar}, we obtain
\[
V(u(t))-V(u(\tau))-\langle \nabla V(u(t)),u(t)-u(\tau)\rangle\leq 0
\]
for all $0\leq \tau\leq t\leq T$, which together with \eqref{eq5} and \eqref{eq7} implies that
\[
^{C}D^\alpha_{0+}V(u(t))\leq \langle\nabla V(u(t)),^{C}D^\alpha_{0+}u(t)\rangle, \quad \forall t\in [0,T].
\]
The proof is complete.
\end{proof}
\begin{remark}\label{Remark1}
A special case of Theorem \ref{Inequality} when $V(x)=\|x\|^2$ was proven by Aguila-Camacho, Duarte-Mermoud and Gallegos \cite[Lemma 1\;\& Remark 1]{Camacho}. In the case $V$ is convex and differentiable, the inequality \eqref{Inequality_est} was formulated by Chen {\textit{et al.}} \cite[Theorem 1]{Chen}. To obtain the proof of these results, the authors of \cite{Camacho, Chen} required that the function $x$ in Theorem \ref{Inequality} is differentiable. However, in general, the solutions to fractional differential equations are not differentiable. Thus, in our opinion, this assumption is too restrictive which makes the inequality \eqref{Inequality_est} unable to be directly applied to study the asymptotic behavior of solutions to fractional systems. Our result as presented in Theorem 3 now removes this very restrictive assumption.
\end{remark}

We are now in a position to state the main theorem. It is worth noticing that we do not need the assumption of the global existence of solutions to the system \eqref{Eq_Ex}.
\begin{theorem}\label{main_res}
Consider the equation \eqref{Eq_Ex}. Assume there is a function $V:\R^d\rightarrow \R_+$ satisfying
\begin{itemize}
\item[(V.i)] the function $V$ is convex and differentiable on $\R^d$;
\item[(V.ii)] there exist constants $a,b,C_1,C_2,r>0$ such that $$C_1\|x\|^a\leq V(x)\leq C_2 \|x\|^b$$
for all $x\in B_r(0)$;
\item[(V.iii)] there are constants $C_3\geq 0$ and $c\geq b$ such that
$$\langle \nabla V(x),f(x)\rangle\leq -C_3\|x\|^c$$
for all $x\in B_r(0)$.
\end{itemize}
Then,
\begin{itemize}
\item[(a)] the trivial solution of \eqref{Eq_Ex} is stable if $C_3=0$;
\item[(b)] the trivial solution of \eqref{Eq_Ex} is asymptotically stable if $C_3>0$ and $c\geq b$.
\end{itemize}
\end{theorem}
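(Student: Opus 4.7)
The plan is to convert assumption (V.iii) into a scalar Volterra integral inequality for $V\circ \varphi(\cdot,x_0)$ and then read both (a) and (b) off this inequality. By (f.2) and Theorem 6.5 of Diethelm there is a unique maximal solution $\varphi(\cdot,x_0)$ defined on $[0,t_{\max}(x_0))$; the Volterra representation $\varphi(t,x_0) = x_0 + I^\alpha_{0+} f(\varphi(\cdot,x_0))(t)$ puts $\varphi(\cdot,x_0)$ into $\{x_0\} + I^\alpha_{0+} C([0,T],\R^d)$ on every compact subinterval. Hence Theorem \ref{Inequality} applies, and as long as $\varphi(s,x_0)\in B_r(0)$ for $s\in [0,t]$, assumption (V.iii) gives
$$^{C}D^\alpha_{0+} V(\varphi(t,x_0)) \leq \langle \nabla V(\varphi(t,x_0)), f(\varphi(t,x_0))\rangle \leq -C_3\|\varphi(t,x_0)\|^c.$$
Applying $I^\alpha_{0+}$ and using $I^\alpha_{0+}{}^{C}D^\alpha_{0+} g = g - g(0)$ (valid for $g = V\circ\varphi$ by the continuity of $^{C}D^\alpha_{0+} g$ established in the proof of Theorem \ref{Inequality}), I would obtain the basic estimate
$$V(\varphi(t,x_0)) + \frac{C_3}{\Gamma(\alpha)}\int_0^t (t-s)^{\alpha-1}\|\varphi(s,x_0)\|^c\,ds \leq V(x_0).$$

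To prove (a) I would drop the nonnegative integral term (since $C_3 = 0$) and invoke (V.ii) to get $\|\varphi(t,x_0)\|\leq (C_2/C_1)^{1/a}\|x_0\|^{b/a}$ on every interval where the trajectory stays in $B_r(0)$. Given $\varepsilon>0$, pick $\delta\in(0,r)$ so small that $(C_2/C_1)^{1/a}\delta^{b/a} < \min\{\varepsilon,r\}$. A standard bootstrap, setting $T^{\ast} = \sup\{t<t_{\max}(x_0):\varphi([0,t],x_0)\subset B_r(0)\}$, noting that on $[0,T^{\ast})$ the bound strictly separates $\|\varphi\|$ from $r$, and concluding by continuity, forces $T^{\ast} = t_{\max}(x_0)$. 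The continuation principle for fractional ODEs (bounded trajectories in the domain of the local Lipschitz vector field do not blow up) then gives $t_{\max}(x_0) = \infty$, and the stability estimate follows immediately.

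For (b), part (a) already secures global existence with $\varphi(t,x_0)\in B_r(0)$ for small $\|x_0\|$. Shrinking $\delta$ once more so that $V(x_0)\leq 1$, assumption (V.ii) together with $c\geq b$ gives $\|\varphi(s,x_0)\|^c \geq (V(\varphi(s,x_0))/C_2)^{c/b}$, and inserting this into the basic estimate yields the self-referential Volterra inequality
$$V(\varphi(t,x_0)) + \frac{C_3}{C_2^{c/b}\,\Gamma(\alpha)}\int_0^t (t-s)^{\alpha-1}V(\varphi(s,x_0))^{c/b}\,ds \leq V(x_0).$$
I would finish by comparing $V\circ\varphi$ with the unique nonnegative solution $h$ of the companion equation obtained from the above with equality, and showing $h(t)\to 0$ as $t\to\infty$ (Mittag--Leffler decay when $c=b$; for $c>b$ one exploits monotonicity of $h$ and integrability of $h^{c/b}$ against the singular kernel $(t-s)^{\alpha-1}$). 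The lower bound in (V.ii) then converts this into $\|\varphi(t,x_0)\|\to 0$.

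The main obstacle is precisely this last step: a clean scalar comparison principle and a decay statement for nonnegative solutions of the inequality $g(t) + \kappa I^\alpha_{0+} g^p(t) \leq g(0)$ with $p\geq 1$ and $\kappa>0$. Pointwise comparison via fractional derivatives is not directly available since such $g$ need not be classically differentiable, and this is where I expect the paper to invoke the promised lemmas on positive solutions of Volterra integral equations rather than manipulate $^{C}D^\alpha_{0+}$ directly.
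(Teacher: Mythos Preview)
Your proposal is correct and close in spirit to the paper's argument, but the paper handles two technical points differently. First, instead of your bootstrap-plus-continuation argument for (a), the paper extends $f|_{B_{r_1}(0)}$ to a globally Lipschitz $F:\R^d\to\R^d$ (Kirszbraun-type extension) and works with the auxiliary system $^{C}D^\alpha_{0+}x = F(x)$, whose solutions exist globally by a result of Baleanu--Mustafa; a first-exit-time contradiction then shows the auxiliary trajectory never leaves $B_\varepsilon(0)$, so it coincides with the original solution and global existence for \eqref{Eq_Ex} comes for free. Second, the paper stays in differential form throughout: from $^{C}D^\alpha_{0+}V(\varphi)\leq 0$ (respectively $\leq A\,V(\varphi)^{p}$ with $A=-C_3/C_2^{c/b}$ and $p=c/b$) it invokes a fractional comparison lemma (Li--Chen--Podlubny) rather than passing to the Volterra integral inequality you write down. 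For the decay in (b) your anticipation is exactly right: the paper does not argue directly with the inequality but cites an external result (Feng--Li--Liu--Xu) guaranteeing that the scalar problem $^{C}D^\alpha_{0+}y = Ay^{p}$, $y(0)>0$, has a global positive solution tending to zero, and then applies the comparison lemma again. Your integral-inequality route is equivalent and arguably more self-contained for (a); for (b) it leads to the same black box. The Lipschitz-extension device is the one genuinely labor-saving idea you are missing.
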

\begin{proof}
From the assumption (f.1), there is a constant $r_1\in (0,r)$ such that $f$ is Lipschitz continuous on $B_{r_1}(0)$. Let $L$ be a Lipschitz constant to $f$ on $B_{r_1}(0)$ and let $F$ denote an extended Lipschitz function of $f$ with the Lipschitz constant $L$, i.e. the function $F:\R^d\rightarrow \R^d$ is Lipschitz continuous with the Lipschitz constant $L$ and $F(x)=f(x)$ for all $x\in B_{r_1}(0)$. Note that this extension always exists, see e.g. \cite[Theorem 2.5]{Heinonen}. For any $\varepsilon\in (0,r_1)$, we choose $\delta=\frac{1}{K}\left(\frac{C_1}{C_2}\right)^{1/b}\varepsilon^{a/b}$, where $K>1$ is large enough to $\delta<\varepsilon$. For any $x_0\in B_\delta(0)$, denote $\tilde{\varphi}(\cdot,x_0)$ the solution to the initial problem
\begin{equation}\label{bt}
\begin{cases}
^{C}D^\alpha_{0+}x(t)=F(x(t)),\quad t>0,\\
x(0)=x_0.
\end{cases}
\end{equation}
Due to \cite[Theorem 2]{Baleanu}, this solution is defined uniquely on the  whole interval $[0,\infty)$. Assume that there is a time $t>0$ such that $\|\tilde{\varphi}(t,x_0)\|=\varepsilon$. Put $t_0:=\inf\{t>0:\|\tilde{\varphi}(t,x_0)\|\geq \varepsilon\}$, then $t_0>0$, $\|\tilde{\varphi}(t_0,x_0)\|=\varepsilon$ and $\|\tilde{\varphi}(t,x_0)\|<\varepsilon$ for all $t\in [0,t_0)$. Hence, $\tilde{\varphi}(\cdot,x_0)$ satisfies the conditions (V.ii) and (V.iii) on the interval $[0,t_0]$.\\

\noindent (a) Now consider the case $C_3=0$. Using Theorem \ref{Inequality}, we have
\[
^{C}D^\alpha_{0+}V(\tilde{\varphi}(t,x_0))\leq \langle\nabla V(\tilde{\varphi}(t,x_0)),F(\tilde{\varphi}(t,x_0))\rangle\leq 0,
\]
for all $t\in [0,t_0]$. Hence, by the comparison lemma \cite[Lemma 10]{Li}, the following estimation holds
\begin{equation}
V(\tilde{\varphi}(t,x_0))\leq V(x_0),\quad \text{for all}\;t\in [0,t_0],
\end{equation}
this combining with (V.i) implies that
\begin{equation}\label{est_eq1}
\|\tilde{\varphi}(t,x_0)\|\leq \left(\frac{C_2}{C_1}\|x_0\|^b\right)^{1/a},\quad\text{for all}\;t\in [0,t_0].
\end{equation}
From \eqref{est_eq1}, we see
\[
\|\tilde{\varphi}(t_0,x_0)\|\leq \left(\frac{C_2}{C_1}\|x_0\|^b\right)^{1/a}\leq \left(\frac{C_2}{C_1}\delta^b\right)^{1/a}<\varepsilon,
\]
a contradiction. Thus, $\|\tilde{\varphi}(t,x_0)\|<\varepsilon$ for all $t\in [0,\infty)$. However, in this case, $\tilde{\varphi}(\cdot,x_0)$ is also a solution to the original equation \eqref{Eq_Ex} with the initial condition $x(0)=x_0$, which shows that the trivial solution to \eqref{Eq_Ex} is stable.\\

\noindent (b) Assume that $C_3>0$. As proved in part (a), we see that the trivial solution to \eqref{Eq_Ex} is stable. Hence, for $\varepsilon>0$ small enough (for example choosing $\varepsilon< r_1$), there exists $\delta>0$ such that every solution $\varphi(t,x_0)$ to \eqref{Eq_Ex} with $\|x_0\|<\delta$ satisfies $\|\varphi(t,x_0)\|<\varepsilon$ for all $t\geq 0$. Moreover, from Theorem 3 and the conditions (V.ii) and (V.iii), we have
\begin{align*}
^{C}D^\alpha_{0+}V(\varphi(t,x_0))&\leq -C_3\|\varphi(t,x_0)\|^c\\
&\leq -\frac{C_3}{C_2^{c/b}}(V(\varphi(t,x_0)))^{c/b},\quad \forall t\geq 0.
\end{align*}
Put $A:=-\frac{C_3}{C_2^{c/b}}$, $p:=\frac{c}{b}$ and consider the following initial value problem 
\begin{equation}\label{tam_1}
\begin{cases}
^{C}D^\alpha_{0+}y(t)=Ay^p(t),\quad t>0,\\
y(0)=y_0>0.
\end{cases}
\end{equation}
Following \cite[Theorem 5.4]{Feng}, the solution $\Phi(\cdot,y_0)$ to \eqref{tam_1} exists on the whole interval $[0,\infty)$ and satisfies $\lim_{t\to \infty}\Phi(t,y_0)=0$. On the other hand, by the comparison lemma \cite[Lemma 10]{Li}, we obtain that
\[
V(\varphi(t,x_0))\leq \Phi(t,V(x_0)),
\quad \forall t\geq 0.
\]
This implies that for any $x_0\in B(0,\delta)\setminus\{0\}$, we have
\[
\lim_{t\to\infty}\|\varphi(t,x_0)\|^a\leq \frac{1}{C_1}\lim_{t\to\infty}V(\varphi(t,x_0))\leq \frac{1}{C_1}\lim_{t\to\infty}\Phi(t,V(x_0))=0.
\]
Note that from the existence and uniqueness of the solutions to \eqref{bt}, if $x_0=0$ then $\varphi(\cdot,0)=0$. So, the trivial solution to the original system \eqref{Eq_Ex} is asymptotically stable. The proof is complete.
\end{proof}
\begin{remark}
Recently, Chen {\emph{et al.}} \cite[Theorem 2, pp. 1072]{Chen} proposed a fractional Lyapunov candidate for fractional systems with the same assumptions as in Theorem \ref{main_res}. However, the proof of their result is incomplete. Indeed, the approach in \cite[Theorem 2]{Chen} is based on the inequality (10) in \cite[Theorem 1]{Chen} and \cite[Theorem 8, pp. 1967]{Li}. As mentioned in Remark \ref{Remark1}, this inequality was established only for differentiable functions. On the other hand, solutions to fractional differential equations are generally not differentiable. Thus, it is impossible to prove \cite[Theorem 2]{Chen} by using the inequality (10) in \cite[Theorem 1]{Chen} as the authors asserted.
\end{remark}
\begin{remark}\label{mistake_1}
Many researchers have proposed fractional Lyapunov functions for fractional-order systems by combining the inequalities \cite[Inequality (16), pp. 2954]{Camacho}, or \cite[Inequality (24), pp. 654]{Camacho_1}, or \cite[Inequality (10), pp. 8]{Chen} with \cite[Theorem 11]{Li}, see e.g. \cite[Theorem 3, pp. 1072]{Chen}, \cite[Theorem 1, pp. 1361]{Aghababa}, \cite[Theorems 2\;\& 3, pp. 684]{Ding}. However, it should be noted that there is a gap in the proof of \cite[Theorem 11]{Li}. Indeed, to prove this theorem, the authors \cite{Li} relied on the following arguments as follows. Consider a function $x:[0,\infty)\rightarrow \R_+$. If $x$ does not satisfy the two conditions as below:\\
{\bf Case 1}: there is a constant $t_1>0$ such that $x(t_1)=0$; and\\
{\bf Case 2:} there exists a positive constant $\varepsilon$ such that $x(t)\geq \varepsilon,\;\forall t\geq 0$.\\
Then $\lim_{t\to \infty}x(t)=0$. Unfortunately, this argument seems incorrect. For a counter example, we consider the function $x(t)=\frac{1}{1+t}+\sin(t)+1,\;\forall t\geq 0$. It is obvious that $x(t)>0,\;\forall t\geq 0. $ Furthermore, there exists the sequence $\{t_k\}_{k=1}^\infty$, where $t_k=-\frac{\pi}{2}+2k\pi,\; k\in \N$, such that $\lim_{k\to\infty}x(t_k)=0$. Hence, there does not exist a parameter $\varepsilon>0$ such that $x(t)\geq \varepsilon,\; \forall t\geq 0$. Thus, the function $x$ does not satisfy both {\bf Case 1} and {\bf Case 2} as above. On the other hand, this function does not tend to zero at infinity.
\end{remark}

Finally, we illustrate the theoretical result by two examples as follows.
\begin{example}
Let the equation
\begin{equation*}
^{C}D^\alpha_{0+}x(t)=-Ax(t),
\end{equation*}
where $A\in\R^{d\times d}$ is a symmetric and positive definite matrix. Choosing the Lyapunov function $V(x)=\langle x,x\rangle$ for all $x\in\R^d$ and using Theorem \ref{main_res} (b), we see that the trivial solution to this equation is asymptotically stable.
\end{example}
\begin{example}
Consider the equation
\begin{equation}\label{ex_1}
^{C}D^\alpha_{0+}x(t)=-x^3(t),\quad \forall t\geq 0.
\end{equation}
It is obvious that the function $f(x)=-x^3$ is local Lipschitz. Choosing the function $V(x)=x^2$ for all $x\in \R$. This function satisfies the conditions (V.i), (V.ii) (with $C_1=C_2=1$ and $a=b=2$), and (V.iii) (with $C_3=2$ and $c=4$). Thus, from Theorem \ref{main_res}(b), the trivial solution to \eqref{ex_1} is asymptotically stable. Fig.1 depicts the trajectory of the solutions $\varphi(\cdot,1)$, $\varphi(\cdot,0.6)$ and $\varphi(\cdot,-0.8)$ to the equation \eqref{ex_1} with $\alpha=0.8$. For a small $\varepsilon_0$ (in this case we choose $\varepsilon_0=0.1$), after $t_0=1000s$ these solutions contain in the interval $[-0.1,0.1]$.
\begin{figure}
\centering
          \includegraphics[width=1.05\textwidth]{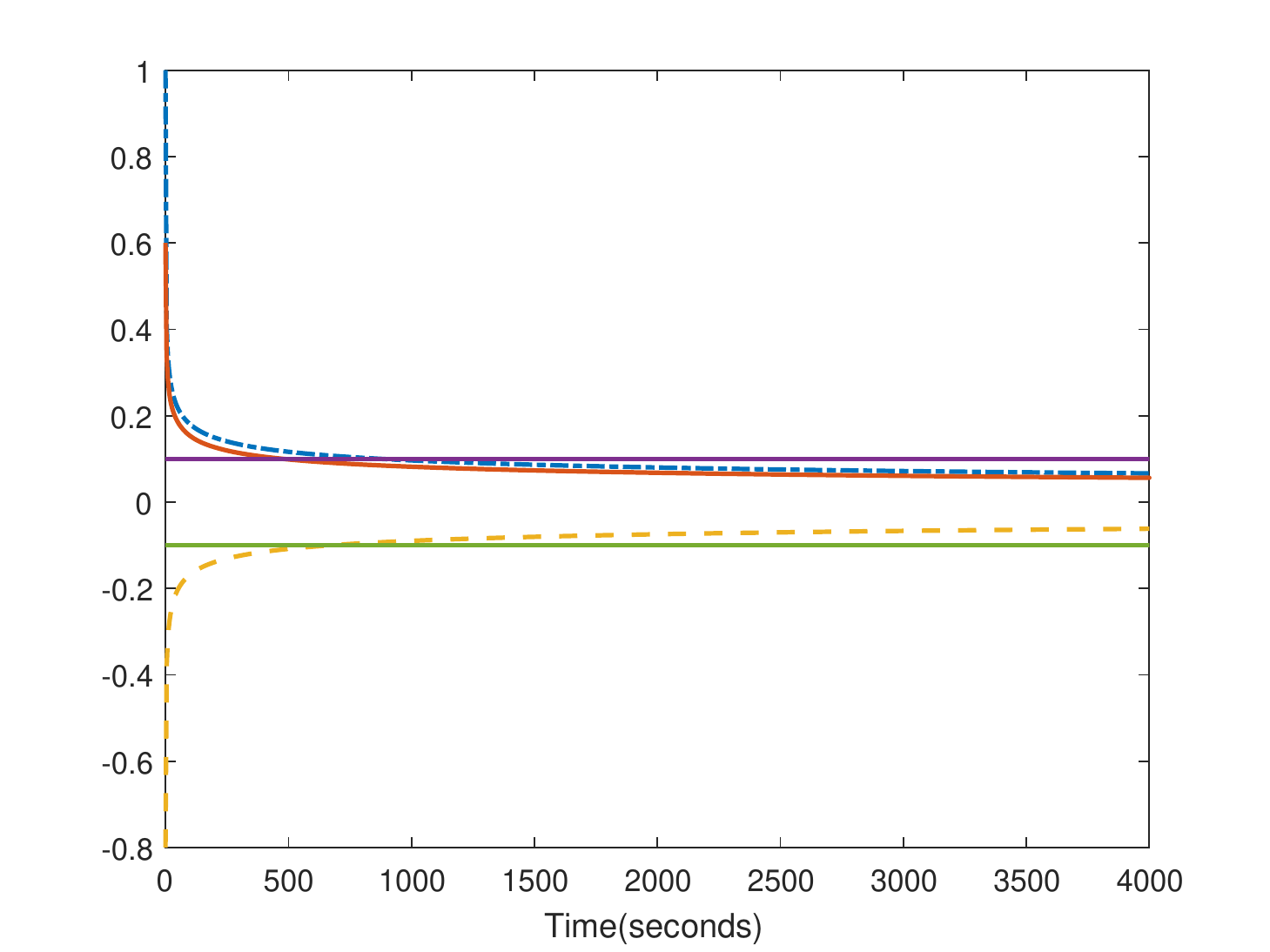}
\caption{Trajectory of the solutions $\varphi(\cdot,1)$, $\varphi(\cdot,0.6)$ and $\varphi(\cdot,-0.8)$ with $\alpha=0.8$.}
\end{figure}
Note that Li \textit{et al.,} \cite[Example 14]{Li} attempted to show that the trivial solution to \eqref{ex_1} is asymptotically stable. Their proof was based on the following statement: Let $x$ be a solution to \eqref{ex_1} with $x(0)\ne 0$. If there is not a constant $\xi>0$ to $x(t)x(0)\geq \xi$ for all $t\geq 0$, then $x(t)\to 0$ as $t\to\infty$, see the lines from -1 to -6, column 2, pp. 1968 in \cite{Li}. Unfortunately, this statement is not correct. For a counterexample, see Remark \ref{mistake_1}. In \cite[Remark 11]{Shen}, the authors revised \cite[Example 14]{Li}. However, their proof is also incomplete because they used \cite[Theorem 11]{Li}. As we have showed in Remark \ref{mistake_1}, the proof of \cite[Theorem 11]{Li} is incomplete. Zhou \textit{et al.} \cite{Zhou} also attempted to prove the asymptotic stability of the trivial solution to \eqref{ex_1}. However, their work was based on an incorrect result, see \cite[Theorem 3.1]{Zhou}.
\end{example}
\section*{Conclusion}
In this paper, we have proposed a rigorous Lyapunov type method to analyze the stability of fractional-order  nonlinear systems. More precisely, we make two main contributions:
\begin{itemize}
\item[$\bullet$] Proving the inequality concerning the fractional derivatives of convex Lyapunov functions without the assumption of the existence of derivative of pseudo-states, see Theorem \ref{Inequality}; and
\item[$\bullet$] Establishing the fractional Lyapunov functions to fractional-order systems without the assumption of the global existence of solutions, see Theorem \ref{main_res}.
\end{itemize}
\section*{Acknowledgement}
The first author is funded by the Vietnam National Foundation for
Science and Technology Development (NAFOSTED) under Grant Number 101.03-2017.01.

\end{document}